\newtheorem{theorem}{Theorem}[section]
\newtheorem{lemma}[theorem]{Lemma}
\newtheorem{corollary}[theorem]{Corollary}
\theoremstyle{definition}
\theoremstyle{remark}
\numberwithin{equation}{section}
\begin{document}

\title{ Miyaoka-Yau
  inequality for minimal projective manifolds  of general type }

%    Information for first author
\author{Yuguang Zhang}
%    Address of record for the research reported here
\address{Department of Mathematics, Capital Normal University,
Beijing, P.R.China }
%    Current address
\curraddr{Department of Mathematical Sciences, Korea Advanced
Institute of Science and Technology, Daejeon, Republic of  Korea}
\email{zhangyuguang76@yahoo.com}
%
%    Information for second author

\dedicatory{}

\begin{abstract}
In this short
 note, we  prove  the Miyaoka-Yau
  inequality  for minimal projective $n$-manifolds of
general type by
 using K\"ahler-Ricci flow.
\end{abstract}

\maketitle

%%%%%%%%%%%%%%%%%%%%%%%%%%%%%%%%%%%%%%%%%%%%%%%%%%%%%%%%%%%%%%%%%%%%%%%%

%%%%%%%%%%%%%%%%%%%%%%%%%%%%%%%%%%%%%%%%%%%%%%%%%%%%%%%%%%%%%%%%%%%%%%%%

\section{Introduction}
If  $M$ is   a  projective  $n$-manifold  with ample canonical
bundle $\mathcal{K}_{M}$, there exists a  K\"ahler-Einstein metric
$\omega$ with negative scalar curvature by Yau's
  theorem    on  the  Calabi  conjecture (\cite{Ya2}), which was obtained by Aubin independently (\cite{Au}).   As a
  consequence, there is an   inequality for Chern numbers,  the Miyaoka-Yau
  inequality,    \begin{equation}\label{1.1}(\frac{2(n+1)}{n}c_{2}(M)-c_{1}^{2}(M))\cdot (-c_{1}(M))^{n-2}\geq 0,
  \end{equation} where $ c_{1}(M)$ and $c_{2}(M)$ are the first and
  the second Chern classes  of $M$ (c.f. \cite{Ya1}). Furthermore, if
  the equality in (\ref{1.1}) holds, the K\"ahler-Einstein metric
$\omega$ is a complex hyperbolic metric, i.e. the holomorphic
 sectional curvature of $\omega$ is a negative constant.  If $n=2$,
(\ref{1.1}) even holds for algebraic surfaces of general type (c.f.
\cite{CY}, \cite{Kob}, \cite{M}), which may not admit any
K\"ahler-Einstein metric.   In \cite{Ts2}, the inequality
(\ref{1.1}) is proved  for any dimensional minimal projective
manifold of general type by using singular K\"ahler-Einstein
metrics.  In this short note, we give a different proof of
(\ref{1.1}) for minimal projective $n$-manifolds of general type by
 using K\"ahler-Ricci flow, and study the extremal  case of
 (\ref{1.1}).

Let $M$ be  a minimal projective  manifold of general type with
$dim_{\mathbb{C}}M=n\geq 2$. The canonical bundle $\mathcal{K}_{M}$
of $M$  is  big,    and semi-ample, i.e. $ \mathcal{K}_{M}^{n}>0$,
and,  for a positive integer $m\gg 1$, the  linear system
$|m\mathcal{K}_{M}|$ is base point free (as quoted in  \cite{Ts}).
For $m\gg 1$, the complete linear system $|m\mathcal{K}_{M}|$
defines a holomorphic  map $\Phi: M \longrightarrow
\mathbb{CP}^{N}$, which is birational onto its image $M_{can}$.
$M_{can}$ is called the canonical model of $M$, and $\Phi$ is called
  the contraction map. Note that $M$ may not admit any
  K\"ahler-Einstein metric.  The K\"ahler-Ricci flow is an  evolution
  equation of a family of K\"ahler metrics $\omega_{t}$, $t\in
[0,T)$,  on $M$, \begin{equation}\label{1.01}
\partial_{t}\omega_{t}=-Ric(\omega_{t})-\omega_{t},
\end{equation}  where $Ric(\omega_{t})$ is the Ricci form of
$\omega_{t}$. By \cite{Ts} \cite{TZ}
 \cite{CL} and \cite{Z},  for any  K\"ahler metric as
  initial metric, the solution $\omega_{t}$ of the K\"ahler-Ricci flow
  equation   exists for
 all time $t\in [0, \infty)$, and the scalar curvature of
 $\omega_{t}$ is  uniformly bounded.  Thus   we can prove (\ref{1.1}) by   using
    the technique developed  in
 \cite{FZZ}, where a Hitchin-Thorpe type inequality was proved  for
 $4$-manifolds  which admit  a long time solution to  a  normalized  Ricci flow
 equation with bounded scalar curvature. Before proving  the Miyaoka-Yau
  inequality, we show that the $L^{2}$-norm of the  Einstein tensor
  tends to zero along a subsequence of a solution of the K\"ahler-Ricci flow
  equation  (\ref{1.01}).

\vskip 5mm

\begin{theorem} Let  $M$ be  a minimal projective  manifold of general type
with $dim_{\mathbb{C}}M=n\geq 2$, and $\omega_{t}$, $t\in [0,
\infty)$, be a solution of the K\"ahler-Ricci flow
  equation  (\ref{1.01}).  Then there exists a sequence of times
  $t_{k}\longrightarrow \infty$, when $k\longrightarrow\infty$, such
  that
$$\lim_{k\longrightarrow \infty}
\int_{M}|\rho_{t_{k}}|^{2}\omega_{t_{k}}^{n}=0,
$$ where $\rho_{t_{k}}=Ric_{t_{k}}-\frac{R_{t_{k}}}{n}\omega_{t_{k}}$ denotes
the Einstein tensor of $\omega_{t_{k}}$, and  $R_{t_{k}}$ denotes
the scalar curvature of $\omega_{t_{k}}$.
\end{theorem}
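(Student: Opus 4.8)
The plan is to extract the required subsequence from the finiteness of a space--time integral of the flow speed. First I would record the cohomological behaviour of the flow. Since $\partial_t[\omega_t]=-2\pi c_1(M)-[\omega_t]$, one gets $[\omega_t]+2\pi c_1(M)=e^{-t}([\omega_0]+2\pi c_1(M))$, so $[\omega_t]\to-2\pi c_1(M)=2\pi c_1(\mathcal K_M)$. Consequently the volume $\int_M\omega_t^n=[\omega_t]^n$ and the total scalar curvature $\int_M R_t\omega_t^n=2\pi n\,c_1(M)\cdot[\omega_t]^{n-1}$ (using $[\mathrm{Ric}_t]=2\pi c_1(M)$) both converge, to $V_\infty:=(2\pi)^n\mathcal K_M^n$ and $-nV_\infty$ respectively; in particular the average scalar curvature tends to $-n$ and $\int_M(R_t+n)\omega_t^n=O(e^{-t})$.

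Next I would turn the desired quantity into a total derivative plus controllable terms. Using $\partial_t\omega_t^n=-(R_t+n)\omega_t^n$ and the evolution equation $\partial_t R_t=\Delta_{\omega_t}R_t+|\mathrm{Ric}_t|^2+R_t$ for the normalized flow, together with $|\rho_t|^2=|\mathrm{Ric}_t|^2-\tfrac1n R_t^2$, a direct computation (the Laplacian term integrating to zero) yields the identity
\[ \int_M|\rho_t|^2\omega_t^n=\frac{d}{dt}\Big(\int_M R_t\,\omega_t^n\Big)+\frac{n-1}{n}\int_M(R_t+n)^2\omega_t^n-(n-1)\int_M(R_t+n)\omega_t^n. \]
Since the first term on the right is the derivative of a convergent quantity and the last is $O(e^{-t})$ and hence integrable in $t$, integrating over $[0,\infty)$ shows that $\int_0^\infty\int_M|\rho_t|^2\omega_t^n\,dt<\infty$ is equivalent to $\int_0^\infty\int_M(R_t+n)^2\omega_t^n\,dt<\infty$. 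Equivalently, because $|\mathrm{Ric}_t+\omega_t|^2=|\rho_t|^2+\tfrac1n(R_t+n)^2=|\partial_t\omega_t|^2$, the whole problem reduces to the finiteness of the total energy $\int_0^\infty\int_M|\partial_t\omega_t|^2\omega_t^n\,dt$ of the flow.

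To establish this finiteness I would pass to the parabolic Monge--Amp\`ere formulation. Writing $\omega_t=\hat\omega_t+i\partial\bar\partial\varphi_t$ with $\hat\omega_t=e^{-t}\omega_0+(1-e^{-t})\omega_\infty$ for a fixed semipositive $\omega_\infty\in 2\pi c_1(\mathcal K_M)$ and a background volume form $\Omega$ with $\mathrm{Ric}(\Omega)=-\omega_\infty$, the flow becomes $\partial_t\varphi_t=\log(\omega_t^n/\Omega)-\varphi_t$, and differentiating gives the clean relation $R_t+n=-(\partial_t\dot\varphi_t+\dot\varphi_t)$. Combined with the uniform bounds $\|\dot\varphi_t\|_{L^\infty}\le C$ and $|R_t|\le C$ supplied by the cited works, this lets one interpret $\int_M(R_t+n)^2\omega_t^n$ as, up to $O(e^{-t})$ errors, the dissipation $-\frac{d}{dt}\mathcal E(\omega_t)$ of an energy functional $\mathcal E$ that is bounded below precisely because $\mathcal K_M$ is big; integrating in time then gives the required bound.

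The main obstacle is exactly this last step. The difficulty is twofold: the K\"ahler class $[\omega_t]$ moves, so the usual fixed-class monotonicity of the K-energy/Ding-type functional must be replaced by a time-dependent version whose error terms have to be absorbed using the $e^{-t}$-decay of $[\omega_t]+2\pi c_1(M)$; and the uniform scalar-curvature bound alone is too weak to close a naive energy estimate, since the cubic terms $\int_M(R_t+n)^3\omega_t^n$ and $\int_M(R_t+n)|\rho_t|^2\omega_t^n$ are controlled only by the (not necessarily small) curvature bound, so the boundedness below of $\mathcal E$, i.e.\ the general-type hypothesis, is genuinely needed. Once $\int_0^\infty\int_M(R_t+n)^2\omega_t^n\,dt<\infty$ is in hand, $\int_0^\infty\int_M|\rho_t|^2\omega_t^n\,dt<\infty$ follows from the displayed identity, and choosing $t_k\to\infty$ along which the integrand of the time integral tends to $0$ gives $\lim_k\int_M|\rho_{t_k}|^2\omega_{t_k}^n=0$, as desired.
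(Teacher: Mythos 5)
Your reduction is sound, and it is in fact the same computation the paper performs: integrating the evolution equation $\partial_t R_t=\Delta_t R_t+|\mathrm{Ric}_t|^2+R_t$ against $\omega_t^n$, together with the cohomological facts $[\omega_t]+2\pi c_1(M)=e^{-t}([\omega_0]+2\pi c_1(M))$ and $\int_M(R_t+n)\omega_t^n=O(e^{-t})$, correctly reduces the theorem to the finiteness of $\int_0^\infty\int_M(R_t+n)^2\omega_t^n\,dt$, equivalently of $\int_0^\infty\int_M|\mathrm{Ric}_t+\omega_t|^2\omega_t^n\,dt$ (your displayed identity agrees term by term with the paper's, since $(R_t+1)(R_t+n)-\tfrac1n(R_t+n)^2=\tfrac{n-1}{n}(R_t+n)^2-(n-1)(R_t+n)$). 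But that finiteness is exactly what you do not prove: your final step invokes an unspecified energy functional $\mathcal{E}$ in the moving-class Monge--Amp\`ere formulation whose dissipation is supposed to dominate $\int_M(R_t+n)^2\omega_t^n$, and you yourself flag this as ``the main obstacle'' and leave its two difficulties unresolved. As written, the proposal is a correct reduction plus an unproven key estimate, so it is not a proof.

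The gap has an elementary closure, and it is the one the paper uses; notably, it contradicts your diagnosis that ``the uniform scalar-curvature bound alone is too weak.'' Since $|R_t|\le C$ uniformly (Zhang's bound, which you already cite), one has the pointwise inequality $(R_t+n)^2\le (C+n)|R_t+n|$, so the $L^2$ bound reduces to an $L^1$ bound in space, integrable in time. This $L^1$ bound needs no energy functional: writing the evolution as $\partial_t R_t=\Delta_t R_t+|\mathrm{Ric}_t+\omega_t|^2-(R_t+n)$ and applying the maximum principle gives $\partial_t\breve{R}_t\ge-(\breve{R}_t+n)$ for $\breve{R}_t=\inf_M R_t$, hence $\breve{R}_t+n\ge(\breve{R}_0+n)e^{-t}$, which bounds the negative part of $R_t+n$ pointwise by a constant times $e^{-t}$; the paper complements this with the cohomological upper bound $\breve{R}_t+n\le ce^{-t}$ (its Lemma 2.1, from $\breve{R}_t\int_M\omega_t^n\le 2\pi n\,c_1(M)\cdot[\omega_t]^{n-1}$). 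Combining the two-sided control of $\breve{R}_t+n$ with the signed identity $\int_M(R_t+n)\omega_t^n=ne^{-t}(2\pi c_1(M)+[\omega_0])\cdot[\omega_t]^{n-1}$ yields $\int_M|R_t+n|\omega_t^n\le C'e^{-t}$ (the paper's Lemma 2.2), hence $\int_0^\infty\int_M(R_t+n)^2\omega_t^n\,dt<\infty$, which closes your argument. So the ingredients you already assembled (the scalar bound and the $e^{-t}$ cohomological decay) do suffice, once supplemented by the maximum-principle bound on $\inf_M R_t$; the moving-class K-energy machinery you propose, with all its attendant difficulties, is unnecessary.
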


\vskip 5mm
 As a corollary of this theorem, we obtain the   Miyaoka-Yau
  inequality for minimal projective  manifolds  of general type.
\vskip 5mm

\begin{corollary}
If $M$ is a minimal projective  manifold of general type with
$dim_{\mathbb{C}}M=n\geq 2$, then
$$(\frac{2(n+1)}{n}c_{2}(M)-c_{1}^{2}(M))\cdot (-c_{1}(M))^{n-2}\geq 0.
$$ Furthermore, if the equality  holds,  there is a complex hyperbolic
metric on the smooth part $M_{0}$ of the canonical model $M_{can}$
of $M$.
\end{corollary}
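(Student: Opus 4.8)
The plan is to convert the topological pairing into a curvature integral along the flow and then exploit the $L^2$-vanishing of the Einstein tensor provided by the Theorem. First I would record the cohomological behaviour of (\ref{1.01}): since $[Ric(\omega_t)]=2\pi c_1(M)$, the flow gives $\frac{d}{dt}[\omega_t]=-2\pi c_1(M)-[\omega_t]$, hence $[\omega_t]=-2\pi c_1(M)+e^{-t}([\omega_0]+2\pi c_1(M))$, so $\tfrac{1}{2\pi}[\omega_t]\to -c_1(M)$ as $t\to\infty$. For the subsequence $t_k$ of the Theorem this yields
$$\Big(\tfrac{2(n+1)}{n}c_2-c_1^2\Big)\cdot(-c_1)^{n-2}=\lim_{k\to\infty}\Big(\tfrac{2(n+1)}{n}c_2-c_1^2\Big)\cdot\tfrac{[\omega_{t_k}]^{n-2}}{(2\pi)^{n-2}},$$
the other terms of the binomial expansion being $O(e^{-t_k})$ multiples of fixed intersection numbers. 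Representing the left factor by its Chern--Weil $(2,2)$-form $\Psi_{t_k}$ built from the curvature of $\omega_{t_k}$, and $[\omega_{t_k}]^{n-2}$ by $\omega_{t_k}^{n-2}$, this equals $\lim_k (2\pi)^{-(n-2)}\int_M\Psi_{t_k}\wedge\omega_{t_k}^{n-2}$.

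The key algebraic step is a pointwise decomposition of this integrand. As a top form, $\Psi_{t_k}\wedge\omega_{t_k}^{n-2}$ equals a $U(n)$-invariant quadratic expression in the curvature tensor times the volume form. Decomposing the K\"ahler curvature tensor into its three mutually inequivalent $U(n)$-irreducible components --- the scalar part, the primitive (trace-free Ricci) part carried by $\rho_{t_k}$, and the Bochner tensor $B_{t_k}$ --- Schur's lemma forces the quadratic form to be diagonal, with no cross terms. For the specific Miyaoka--Yau combination $\tfrac{2(n+1)}{n}c_2-c_1^2$ the coefficient of the pure scalar term vanishes (this is exactly the classical Chen--Ogiue computation: a constant-holomorphic-sectional-curvature metric gives integrand $0$, while a K\"ahler--Einstein metric gives a positive multiple of $|B|^2$). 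Hence there are universal constants $\alpha_n>0$ and $\beta_n$ (absorbing the numerical and $2\pi$ factors) with
$$\tfrac{1}{(2\pi)^{n-2}}\int_M\Psi_{t_k}\wedge\omega_{t_k}^{n-2}=\int_M\big(\alpha_n|B_{t_k}|^2+\beta_n|\rho_{t_k}|^2\big)\,\omega_{t_k}^n.$$
Even were one to retain mixed lower-order terms, the uniform bound on the scalar curvature together with Cauchy--Schwarz and $\int_M\omega_{t_k}^n\to(-2\pi c_1)^n$ would control them.

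Now the inequality follows: by the Theorem $\int_M|\rho_{t_k}|^2\omega_{t_k}^n\to 0$, so the $\beta_n$-term drops out in the limit (its sign being irrelevant), giving
$$\Big(\tfrac{2(n+1)}{n}c_2-c_1^2\Big)\cdot(-c_1)^{n-2}=\alpha_n\lim_{k\to\infty}\int_M|B_{t_k}|^2\,\omega_{t_k}^n\geq 0.$$
For equality, suppose the pairing is $0$. Since the limit exists and $\beta_n\int|\rho_{t_k}|^2\to 0$, we also obtain $\int_M|B_{t_k}|^2\omega_{t_k}^n\to 0$: both the Bochner and the Einstein tensors tend to zero in $L^2$. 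I would then invoke the convergence theory for the normalized K\"ahler--Ricci flow on a minimal model of general type (\cite{Ts}, \cite{TZ}, \cite{CL}, \cite{Z}): $\omega_t$ converges in $C^\infty_{loc}$ on the smooth locus $M_0$ of $M_{can}$ to the K\"ahler--Einstein metric $\omega_{KE}$ with $Ric(\omega_{KE})=-\omega_{KE}$. On any compact $K\subset M_0$ the smooth convergence passes the $L^2$-smallness to the limit, forcing $B(\omega_{KE})\equiv 0$ on $M_0$; together with $\rho(\omega_{KE})=0$ this makes $\omega_{KE}$ of constant holomorphic sectional curvature, negative since its scalar curvature is $-n$, i.e. complex hyperbolic.

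I expect the main obstacle to be the equality case rather than the inequality. The inequality reduces to the curvature identity plus the Theorem, but equality requires the delicate local smooth convergence of the flow near the singularities of the canonical model, and a careful argument that $L^2$-closeness to a constant-curvature model along $t_k$ descends to pointwise vanishing of the Bochner tensor of the limiting singular K\"ahler--Einstein metric on $M_0$. Pinning down the no-cross-term, no-scalar-term structure of the Chern--Weil integrand via the $U(n)$-representation decomposition is the other step that must be executed with care.
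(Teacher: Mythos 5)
Your proposal is correct and takes essentially the same approach as the paper: pair the Miyaoka--Yau class with $[\omega_{t_k}]^{n-2}\to(-2\pi c_1(M))^{n-2}$, write the Chern--Weil integrand in terms of the scalar/trace-free-Ricci/Bochner decomposition of the K\"ahler curvature with vanishing scalar coefficient and positive Bochner coefficient, kill the $\rho$-term via Theorem 1.1, and in the equality case pass the $L^2$-vanishing of the Bochner tensor to the limit K\"ahler--Einstein metric on the smooth locus by local smooth convergence. The only cosmetic difference is that the paper quotes Besse's explicit formulas (with constants $\tfrac{n+1}{n}$ and $-\tfrac{n^2-2}{n^2}$) where you derive the diagonal, scalar-free structure abstractly via Schur's lemma and normalize against model metrics.
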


\vskip 5mm

\noindent {\bf Acknowledgement:}  The  author   thanks  Zhou Zhang
for explaining  \cite{Z}, and sending him  the paper \cite{TZ}.
Thanks also goes to referees  for their suggestions  of  improving
the present paper. The  author  also  thanks Valentino Tosatti for
sending him \cite{Ts2}.

\section{Proof of Theorem 1.1}
Let $M$ be a minimal projective manifold of general type with
$dim_{\mathbb{C}}M=n\geq 2$, $M_{can}$ be the canonical model of
$M$, and $\Phi: M\longrightarrow M_{can}$ be the contraction map.
  Consider the   K\"ahler-Ricci flow
  equation  on $M$, \begin{equation}\label{2.1}
\partial_{t}\omega_{t}=-Ric(\omega_{t})-\omega_{t},
\end{equation}  with initial metric $\omega_{0}$.  In \cite{H},
the short time existence of the solution of (\ref{2.1}) is proved.
Then,  in \cite{Ts} \cite{TZ} and \cite{CL}, it is proved that the
solution $\omega_{t}$ of (\ref{2.1}) exists for all time, i.e. $t\in
[0, +\infty)$, and, there exists a unique semi-positive current
$\omega_{\infty}$ on $M$, which   satisfies that
\begin{enumerate}
\item $\omega_{\infty}$ represents $-2\pi c_{1}(M)$.
\item $\omega_{\infty}$ is a smooth  K\"ahler-Einstein metric
with negative scalar curvature on $\Phi^{-1}(M_{0})$, where $M_{0}$
is the smooth part of $M_{can}$.
\item On any compact subset $K\subset \Phi^{-1}(M_{0})$,
$\omega_{t}$ $C^{\infty}$-converges to $\omega_{\infty}$ when
$t\longrightarrow\infty$.
\end{enumerate}
 In \cite{Z},  it is shown that there is a constant $C>0$ depending
 only on $\omega_{0}$ such that \begin{equation}\label{2.2}|R_{t}|<C,
 \end{equation} where $R_{t}$ is the scalar curvature of
 $\omega_{t}$.

  First,  we need
evolution equations for  volume forms and scalar curvatures as
follows,
\begin{equation}\label{2.3}\partial_{t}\omega_{t}^{n}=-(R_{t}+n)\omega_{t}^{n}, \ \ \ \ {\rm
and}\end{equation}
\begin{equation}\label{2.4}\partial_{t}R_{t}=\triangle_{t}
R_{t}+|Ric_{t}|^{2}+R_{t}= \triangle_{t}
R_{t}+|Ric_{t}\textordmasculine|^{2}-(R_{t}+n),\end{equation} where
$Ric_{t}\textordmasculine=Ric_{t}+\omega_{t}$, and
$|Ric_{t}\textordmasculine|^{2}=|Ric_{t}|^{2}+2R_{t}+n $ (c.f. Lemma
2.38 in \cite{C-N}).

\begin{lemma} There  are two  constants $t_{0}>0$  and  $c>0$
independent of
  $t$  such
that, for $t> t_{0}$,
$$\breve{R}_{t}=\inf_{x\in M}R_{t}(x)\leq-n+e^{-t}c< -\frac{n}{2}<0.$$
\end{lemma}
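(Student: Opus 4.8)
The plan is to bound $\breve R_t$ from above by the \emph{average} scalar curvature and to evaluate that average explicitly via cohomology. Since $M$ is compact and $\omega_t$ is a genuine smooth K\"ahler metric on $M$ for every finite $t$, one has
$$\breve R_t=\inf_{M}R_t\le \frac{1}{V(t)}\int_M R_t\,\omega_t^{n}=:\bar R_t,\qquad V(t):=\int_M\omega_t^{n}>0 .$$
Hence it suffices to prove $\bar R_t\le -n+c\,e^{-t}$ for large $t$; the strict bound $\bar R_t<-\tfrac n2$ then follows by taking $t_0$ so large that $c\,e^{-t_0}<\tfrac n2$.

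First I would pass to de Rham cohomology in (\ref{2.1}). Because the Ricci form represents $2\pi c_{1}(M)$, the class $\alpha(t)=[\omega_t]$ satisfies the linear ODE $\alpha'(t)=-2\pi c_{1}(M)-\alpha(t)$, so that $[\omega_t]=(1-e^{-t})\bigl(-2\pi c_{1}(M)\bigr)+e^{-t}[\omega_0]$. Writing $s=e^{-t}$ and setting, for the corresponding top intersection number,
$$P(s)=\bigl((1-s)(-2\pi c_{1}(M))+s[\omega_0]\bigr)^{n},$$
the volume is just the cohomological pairing $V(t)=[\omega_t]^{n}=P(e^{-t})$. The decisive positivity input is that $M$ is of general type: $P(0)=(-2\pi c_{1}(M))^{n}=(2\pi)^{n}\mathcal{K}_{M}^{n}>0$, so $P(s)>0$ on some interval $[0,s_0]$ and $V(t)$ stays positive for $t\ge -\log s_0$.

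Next I would compute the numerator. The pointwise K\"ahler trace identity $Ric(\omega_t)\wedge\omega_t^{n-1}=\tfrac{R_t}{n}\,\omega_t^{n}$ gives $\int_M R_t\,\omega_t^{n}=2\pi n\,c_{1}(M)\cdot[\omega_t]^{n-1}$. Differentiating $P$ and using $-2\pi c_{1}(M)=[\omega_t]-e^{-t}\bigl([\omega_0]+2\pi c_{1}(M)\bigr)$ yields the elementary algebraic identity $\int_M R_t\,\omega_t^{n}=-n\,P(e^{-t})+e^{-t}P'(e^{-t})$. Dividing by $V(t)=P(e^{-t})$ gives the clean formula
$$\bar R_t=-n+e^{-t}\,\frac{P'(e^{-t})}{P(e^{-t})}.$$
Since $P'/P$ is continuous and therefore bounded on $[0,s_0]$, I may choose $c>0$ with $P'(s)/P(s)\le c$ there, so $\bar R_t\le -n+c\,e^{-t}$ for $t\ge -\log s_0$, and the lemma follows with $t_0$ enlarged so that $c\,e^{-t_0}<\tfrac n2$.

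The argument is mostly soft; the only real bookkeeping is verifying that the intersection pairings are legitimate (they are, as each $\omega_t$ is smooth on all of $M$ at finite time) and that general type forces $P(0)>0$, which keeps $V(t)$ positive and $\bar R_t$ well defined on the relevant interval. I expect the main point to guard against is precisely this positivity of $P$ near $s=0$, since it is what converts the asymptotic $\bar R_t\to -n$ into the quantitative rate $c\,e^{-t}$. One can check, via the minimum principle applied to (\ref{2.4}) together with $|Ric_t|^{2}\ge R_t^{2}/n$, that $\breve R_t\ge -n+o(1)$ as well, but this lower bound is not needed: the required upper bound on the infimum comes entirely from the average.
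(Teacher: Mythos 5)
Your proof is correct and takes essentially the same route as the paper: both bound $\breve R_t$ by the average scalar curvature, evaluate that average cohomologically via the explicit solution $[\omega_t]=(1-e^{-t})(-2\pi c_1(M))+e^{-t}[\omega_0]$ of the class ODE, and use $(-c_1(M))^n>0$ (general type) to keep the relevant intersection numbers bounded away from zero as $t\to\infty$, producing the rate $-n+ce^{-t}$. Your packaging via the polynomial $P(s)$ and its logarithmic derivative is simply a tidier form of the paper's estimate $\breve R_t\le -n/(1+e^{-t}A_t)$ with $A_t$ bounded for large $t$.
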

 \begin{proof}
If we define $\alpha_{t}=[\omega_{t}] \in H^{1,1}(M, \mathbb{R})$,
 from (\ref{2.1}) we have
$$
\partial_{t}\alpha_{t}=-2\pi c_{1}(M)-\alpha_{t}, \ \ \ {\rm and} \ \ \ $$ \begin{equation}\label{2.20}\alpha_{t}= -2\pi c_{1}(M)+e^{-t}(2\pi c_{1}(M)+\alpha_{0}).
\end{equation} Thus
\begin{equation}\label{2.30}[\omega_{\infty}]=\alpha_{\infty}=\lim_{t\longrightarrow\infty}
\alpha_{t}=-2\pi c_{1}(M).\end{equation}  Since
$$\breve{R}_{t}\int_{M}\omega_{t}^{n}\leq
 \int_{M}R_{t}\omega_{t}^{n}=n\int_{M}Ric_{t}\wedge
 \omega_{t}^{n-1}=n2\pi c_{1}(M)\cdot \alpha_{t}^{n-1},$$ we obtain $$\breve{R}_{t}\leq n\frac{2\pi c_{1}(M)\cdot \alpha_{t}^{n-1}}{\alpha_{t}^{n}}
 =n\frac{2\pi c_{1}(M)\cdot \alpha_{t}^{n-1}}{-2\pi c_{1}(M)\cdot
 \alpha_{t}^{n-1}+e^{-t}(2\pi c_{1}(M)+\alpha_{0})\cdot
 \alpha_{t}^{n-1}}= \frac{-n}{1+e^{-t}A_{t}}, $$ where $A_{t}=-\frac{(2\pi c_{1}(M)+\alpha_{0})\cdot
 \alpha_{t}^{n-1}}{2\pi c_{1}(M)\cdot
 \alpha_{t}^{n-1}}$. Note that $
 (-c_{1}(M))^{n}>0$.   Thus there is a $t_{1}>0$ such that, if
 $t>t_{1}$, $A_{t}<|\frac{(\alpha_{\infty}+\alpha_{0})\cdot
 \alpha_{\infty}^{n-1}}{
 \alpha_{\infty}^{n}}|+1=A$, and  we obtain that $$\breve{R}_{t}\leq
 \frac{-n}{1+e^{-t}A}<-n+e^{-t}c,$$ where
 $c=-n(\frac{A}{1+e^{-t_{1}}A})$. By taking $t_{0}>t_{1}$ such that
 $e^{-t_{0}}c< \frac{n}{2} $, we obtain the conclusion.
  \end{proof}

  \begin{lemma}
$$
\  \int_{0}^{\infty}\int_{M}|R_{t}+n|\omega_{t}^{n}dt<\infty.$$
\end{lemma}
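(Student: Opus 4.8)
The plan is to prove the sharper quantitative fact that $\int_M|R_t+n|\,\omega_t^n$ itself decays like $e^{-t}$, after which integrability in $t$ is immediate. This rests on two ingredients: cohomological control of the total volume, and a maximum-principle lower bound on the scalar curvature with the correct exponential rate. For the first, note from (\ref{2.20}) that the volume $V(t):=\int_M\omega_t^n=\alpha_t^n$ converges to $\alpha_\infty^n=(-2\pi c_1(M))^n>0$, hence is bounded on $[0,\infty)$. Integrating (\ref{2.3}) over $M$ and using $\partial_t\alpha_t=-2\pi c_1(M)-\alpha_t$, I obtain the signed estimate
\[\int_M(R_t+n)\,\omega_t^n=-\frac{d}{dt}V(t)=n\,e^{-t}\,\alpha_t^{n-1}\cdot\bigl(2\pi c_1(M)+\alpha_0\bigr),\]
whose right-hand side is $O(e^{-t})$ because $\alpha_t^{n-1}\to\alpha_\infty^{n-1}$.

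The key step is the pointwise lower bound $R_t\ge -n-Ce^{-t}$ for large $t$. To get it I apply the maximum principle to the scalar-curvature evolution (\ref{2.4}). By Cauchy--Schwarz $|Ric_t|^2\ge \frac{R_t^2}{n}$, so (\ref{2.4}) gives $\partial_t R_t\ge \triangle_t R_t+\frac{R_t(R_t+n)}{n}$, and therefore the spatial minimum $\breve R_t$ satisfies $\frac{d}{dt}\breve R_t\ge \frac{\breve R_t(\breve R_t+n)}{n}$ in the barrier sense (Hamilton's trick). By the previous lemma I may start the comparison at a time $t_0$ where $\breve R_{t_0}<-\frac n2<0$; comparing with the ODE $\phi'=\frac{\phi(\phi+n)}{n}$, whose solutions with negative initial data converge monotonically to the stable equilibrium $-n$ at rate $e^{-t}$, yields $\breve R_t\ge\phi(t)\ge -n-Ce^{-t}$. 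Consequently $(R_t+n)^-\le Ce^{-t}$ everywhere on $M$.

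I then assemble the two estimates via the identity $|R_t+n|=(R_t+n)+2(R_t+n)^-$. Integrating against $\omega_t^n$ and using the signed bound, the pointwise bound on the negative part, and the boundedness of $V$,
\[\int_M|R_t+n|\,\omega_t^n=\int_M(R_t+n)\,\omega_t^n+2\int_M(R_t+n)^-\,\omega_t^n\le Ce^{-t}+2Ce^{-t}\sup_{s\ge0}V(s)\le C'e^{-t}\]
for $t\ge t_0$, while on $[0,t_0]$ the spatial integral is bounded because $|R_t|<C$ by (\ref{2.2}) and $V$ is bounded. Integrating in time then gives $\int_0^\infty\int_M|R_t+n|\,\omega_t^n\,dt<\infty$.

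The main obstacle is the exponential lower bound on $R_t$: one must justify that the minimum principle applies to the merely continuous function $\breve R_t$, and --- crucially --- that the starting value furnished by the previous lemma places the comparison solution $\phi$ in the basin of the equilibrium $-n$, where it exists for all time and tends to $-n$ like $e^{-t}$, rather than escaping to $+\infty$ in finite time (as happens for positive initial data). Once the negative part is controlled pointwise, everything else is a routine consequence of the volume identity and the boundedness of $V$.
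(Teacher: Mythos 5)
Your proof is correct, and its overall architecture matches the paper's: isolate the signed integral $\int_M(R_t+n)\,\omega_t^n$, compute it cohomologically as $ne^{-t}(2\pi c_1(M)+\alpha_0)\cdot\alpha_t^{n-1}=O(e^{-t})$, and control the remaining error by a pointwise exponential lower bound on the scalar curvature coming from the maximum principle. Where you genuinely diverge is in that last step. The paper obtains its bound (2.7), $n+\breve{R}_t\geq Ce^{-t}$, by simply discarding the nonnegative term $|Ric_t+\omega_t|^2$ in (2.4); the resulting differential inequality $\partial_t\breve{R}_t\geq-(\breve{R}_t+n)$ is linear, holds from $t=0$, places no sign condition on the initial value, and has no blow-up scenario to rule out. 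You instead keep the quadratic term via Cauchy--Schwarz and compare with the Riccati equation $\phi'=\phi(\phi+n)/n$, which is why you must appeal to Lemma 2.1 to place the starting value in the basin of attraction of $-n$ --- exactly the obstacle you flag as the crux of your argument. Your route works (the barrier-sense comparison and the ODE analysis are both sound), but the extra pointwise sharpness of the Riccati comparison buys nothing for this lemma, while the linear inequality sidesteps the basin issue altogether. On the other hand, your decomposition $|R_t+n|=(R_t+n)+2(R_t+n)^-$ is slightly cleaner than the paper's $|R_t+n|\leq(R_t+n)+2|n+\breve{R}_t|$: the paper must bound $|n+\breve{R}_t|$ from both sides, so it needs the upper bound of Lemma 2.1 in addition to the lower bound (2.7), whereas your version needs only the lower bound. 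In the end both arguments invoke Lemma 2.1, but for different purposes: yours for the negative starting value of the ODE comparison, the paper's for the upper bound on $n+\breve{R}_t$.
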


\begin{proof}

By (\ref{2.4})  and the   maximal principle,
$\partial_{t}\breve{R}_{t} \geq -(\breve{R}_{t}+n),$ and  so,
\begin{equation}\label{2.7}n+\breve{R}_{t}\geq
Ce^{-t},\end{equation} for a  constant $ C$ independent of $t$.
  Note that, by Lemma 2.1, (\ref{2.7}) and (\ref{2.20}), when
  $t>t_{0}$,
\begin{eqnarray*}\int_{M}|R_{t}+n|\omega_{t}^{n}& \leq & \int_{M}(R_{t}-\breve{R}_{t})\omega_{t}^{n}+\int_{M}|n+\breve{R}_{t}|\omega_{t}^{n}
\\ & \leq & \int_{M}(R_{t}+n)\omega_{t}^{n}+2\int_{M}|n+\breve{R}_{t}|\omega_{t}^{n}
\\   & \leq &
\int_{M}(R_{t}+n)\omega_{t}^{n}+C_{3}e^{-t}\\ & = & n(2\pi
c_{1}\cdot \alpha_{t}^{n-1}+\alpha_{t}^{n})+C_{3}e^{-t}\\ & =&
ne^{-t}(2\pi c_{1}+\alpha_{0})\cdot \alpha_{t}^{n-1}+C_{3}e^{-t}
\\ & \leq & C_{4}e^{-t},\end{eqnarray*} for two constants $ C_{3}$ and
$ C_{4}$ independent of $t$.  Thus
$$\int_{0}^{\infty}\int_{M}|R_{t}+n|\omega_{t}^{n}dt=\int_{0}^{t_{0}}\int_{M}|R_{t}+n|\omega_{t}^{n}dt+\int_{t_{0}}^{\infty}\int_{M}|R_{t}+n|\omega_{t}^{n}dt <\infty.$$
\end{proof}

\begin{proof}[Proof of Theorem 1.1]
  From  (\ref{2.4}), (\ref{2.3}),  (\ref{2.30}), (\ref{2.2}),  and  Lemma 2.2,  we obtain
\begin{eqnarray}
\int_{0}^{\infty}\int_{M}
|Ric\textordmasculine_{t}|^{2}\omega_{t}^{n}
dt&=&\int_{0}^{\infty}\int_{M}(\frac{\partial}{\partial t}R_{t})
\omega_{t}^{n} dt+\int_{0}^{\infty}\int_{M}(R_{t}+n)\omega_{t}^{n} dt\nonumber\\
&=&\int_{0}^{\infty}\frac{\partial}{\partial
t}(\int_{M}R_{t}\omega_{t}^{n}) dt+
\int_{0}^{\infty}\int_{M} ( R_{t}+1)(R_{t}+n)\omega_{t}^{n} dt\nonumber\\
&\leq& n\alpha_{\infty}^{n}-\int_{M}R_{0}\omega_{0}^{n}+C\int_{0}^{\infty}\int_{M}|R_{t}+n|\omega_{t}^{n} dt\nonumber\\
&< & \infty.\nonumber
\end{eqnarray} If $\rho_{t}=Ric_{t}-\frac{R_{t}}{n}\omega_{t}$ is
the Einstein tensor of $\omega_{t}$, then
$|\rho_{t}|^{2}=|Ric\textordmasculine_{t}|^{2}-\frac{1}{n}(R_{t}+n)^{2}$,
and, from the above estimation, $$\int_{0}^{\infty}\int_{M}
|\rho_{t}|^{2}\omega_{t}^{n} dt\leq \int_{0}^{\infty}\int_{M}
|Ric\textordmasculine_{t}|^{2}\omega_{t}^{n} dt< \infty.$$ Thus
there is a sequence $t_{k}\longrightarrow \infty$ such that
$$\lim_{k\longrightarrow \infty} \int_{M}|\rho
_{t_{k}}|^{2}\omega_{t_{k}}^{n}=0.$$
\end{proof}

\begin{proof}[Proof of Corollary  1.2]
Note that the K\"ahler curvature tensor has a decomposition
$$Rm_{t}=\frac{R_{t}}{2n^{2}}\omega_{t}\otimes
\omega_{t}+\frac{1}{n}\omega_{t}\otimes \rho_{t} + \frac{1}{n}
\rho_{t} \otimes \omega_{t} + B_{t} $$ (c.f. (2.63) and (2.38)   in
\cite{Be}). By Chern-Weil theory,
$$(\frac{2(n+1)}{n}c_{2}(M)-c_{1}^{2}(M))\cdot
[\omega_{t}]^{n-2}=\frac{(n-2)!}{4\pi^{2}n!}\int_{M}(\frac{n+1}{n}|B_{0,t}|^{2}-\frac{(n^{2}-2)}{n^{2}}|\rho_{t}
|^{2})\omega_{t}^{n}
$$ (c.f. (2.82a) and (2.67) in
\cite{Be}),  where $B_{0,t}=B_{t}-\frac{{\rm tr} B_{t}}{n^{2}-1}{\rm
Id}$ is the tensor given by (2.64) in \cite{Be} corresponding to
$\omega_{t}$.
 By Theorem 1.1,
there is a sequence $t_{k}\longrightarrow \infty$ such that
$$\lim_{k\longrightarrow \infty} \int_{M}|\rho
_{t_{k}}|^{2}\omega_{t_{k}}^{n}=0.$$ Hence
\begin{eqnarray*}(\frac{2(n+1)}{n}c_{2}(M)-c_{1}^{2}(M))\cdot
(-2\pi c_{1}(M))^{n-2}& = &
(\frac{2(n+1)}{n}c_{2}(M)-c_{1}^{2}(M))\cdot
[\omega_{\infty}]^{n-2}\\ &=&  \lim_{k\longrightarrow
\infty}(\frac{2(n+1)}{n}c_{2}(M)-c_{1}^{2}(M))\cdot
[\omega_{t_{k}}]^{n-2}\\ &=&  \lim_{k\longrightarrow \infty}
\frac{(n-2)!}{4\pi^{2}n!}\int_{M}(\frac{n+1}{n}|B_{0,t_{k}}|^{2})\omega_{t_{k}}^{n}\\
&\geq & 0.\end{eqnarray*}

 If the  equality  holds, on any compact
subset $K\subset \Phi^{-1}(M_{0})$,
$$\int_{K}|B_{0,\infty}|^{2}\omega_{\infty}^{n}\leq
\lim_{k\longrightarrow \infty}
\int_{M}|B_{0,t_{k}}|^{2}\omega_{t_{k}}^{n}=0,$$ by the smooth
convergence of $\omega_{t}$ to $\omega_{\infty}$. Thus
$B_{0,\infty}\equiv 0$.  Since $\omega_{\infty}$ is a
K\"ahler-Einstein metric with negative scalar curvature on
$\Phi^{-1}(M_{0})$, the holomorphic sectional curvature is a
negative constant  by Section 2.66 in \cite{Be}, i.e.
$\omega_{\infty}$ is a complex hyperbolic metric.
\end{proof}

\bibliographystyle{amsplain}

\begin{thebibliography}{10}

\bibitem {Au} T. Aubin,  \textit{Equations du type Monge-Ampere sur les variete
kaehleriennes compactes.} C. R. Acad. Paris. \textbf{283} (1976),
119-121.
\bibitem{Be}  A. L. Besse, \textit{ Einstein manifolds.} Ergebnisse der Math.
Springer-Verlag, Berlin-New York 1987.
 \bibitem{CL} P.Cascini, P.La Nave, \textit{ K\"ahler-Ricci flow and the minimal model program for projective
 varieties.} arXiv:math/0603064.
\bibitem{C-N} B.Chow, S.C.Chu, D.Glickenstein, C.Guenther,
J.Isenberg, T.Ivey, D.Knopf, P.Lu, F.Luo, L.Ni,  \textit{ The Ricci
Flow: Techniques and Applications, Part I: Geometric Aspect.}
Mathematical Surveys and Monographs, Vol. 135, (2007).
\bibitem{CY} S.Y.Cheng,
  S.T.Yau,  \textit{ Inequality between Chern numbers of singular  K\"ahler  surfaces
and characterization of orbit space of discrete group of SU(2, 1).}
Contemporary Math., \textbf{49} (1986), 31-43.
\bibitem{FZZ} F. Fang, Y.
Zhang and Z. Zhang, \textit{ Non-singular solutions to the
normalized Ricci flow equation.} Math. Ann. \textbf{340} (2008),
647-674.
\bibitem{H} R. Hamilton, \textit{ Three-manifolds with positive Ricci
curvature.} J. Diff. Geom., \textbf{17} (1982), 255-306.
 \bibitem{Kob}  R.Kobayashi, \textit{ Einstein-K\"{a}hler V-metrics on
open Satake V-surfaces with isolated quotient singularities.} Math.
Ann. \textbf{272}, no.3  (1985), 385-398.
 \bibitem{M} Y.Miyaoka,
  \textit{ On the Chern numbers of surfaces of general type.}  Inv.
  math., \textbf{32} (1977), 225-237.
    \bibitem{Ts} H.Tsuji, \textit{ Existence and degeneration of  K\"ahler-Einstein metrics on minimal algebraic varieties of general
  type.} Math. Ann. \textbf{281} (1988), 123-133.
   \bibitem{Ts2} H.Tsuji, \textit{Stability of tangent bundles of minimal algebraic varieties.} Topology, Vol.27 \textbf{4}, (1988), 429-442.
   \bibitem{TZ} G.Tian, Z.Zhang,  \textit{ On the K\"{a}hler-Ricci flow on projective manifolds of general
   type.} Chinese Annals of Mathematics B, \textbf{27} (2006), 179-192.
 \bibitem{Ya1}  S.T.Yau, \textit{ Calabi's conjecture and some new results in
algebraic geometry.}  Proc. Natl. Acad. Sci. USA \textbf{74},
(1977), 1789-1799.
 \bibitem{Ya2}  S.T.Yau, \textit{ On the Ricci curvature of a compact K\"{a}hler  manifold and the complex Monge-
Ampere equations. I.}  Comm. Pure Appl. Math. \textbf{31},  (1978),
339-411.
\bibitem{Z} Z.Zhang,  \textit{ Scalar Curvature Bound for K\"{a}hler-Ricci Flows
over Minimal Manifolds of General Type.}  arXiv:math/0801.32481.

\end{thebibliography}

\end{document}